\newtheorem{thm}{Theorem}
\newtheorem{defn}{Definition}
\numberwithin{equation}{section} \numberwithin{thm}{section}
\numberwithin{lem}{section} \numberwithin{problem}{section}
\numberwithin{cor}{section}
\begin{document}
\title{A greedy algorithm for $B_h[g]$ sequences}
\begin{abstract} For any positive integers $h\ge 2$ and $g\ge 1$, we present a greedy algorithm that provides an infinite $B_h[g]$ sequence with $a_n\le 2gn^{h+(h-1)/g}.$

\end{abstract}
\subjclass{2000 Mathematics Subject Classification: 11B83.}
\keywords{Sidon sets,  greedy algorithm}
\author{Javier Cilleruelo}
\address{Instituto de Ciencias Matem\'aticas (ICMAT) and Departamento de Matem\'aticas\\
Unversidad Aut\'onoma de Madrid\\
Madrid 28049\\
Espa\~na.
}
\email{franciscojavier.cilleruelo@uam.es}


\maketitle

\section{Introduction}
Given positive integers $h\ge 2$ and $g\ge 1$, we say that a sequence of integers $A$ is a $B_h[g]$ sequence if the number of representations of any integer $n$ in the form
$$n=a_1+\dots+a_h,\quad a_1\le \cdots \le a_h,\quad a_i\in A$$
is bounded by $g$.  The $B_h[1]$ sequences are  simply called $B_h$ sequences.

 A trivial counting argument shows that if $A=\{ a_n\}$ is a $B_h[g]$ sequence then $a_n\gg n^h$. On the other hand, the greedy algorithm introduced by Erd\H os \footnote{This algorithm has been atributed to Mian and Chowla, but it seems (see \cite{E1}) that was Erd\H os who first used this algorithm.} provides an infinite $B_h$ sequence with $a_n\le 2n^{2h-1}.$ 
 
 {\bf Classic greedy algorithm}: {\it Let $a_1=1$ and for $n\ge 2$, define $a_n$ as the smallest positive integer, greater than $a_{n-1}$, such that $a_1,\dots, a_n$ is a $B_h[g]$ sequence.}
 
 When $g=1$, the greedy algorithm defines $a_1=1,\ a_2=2$ and for $n\ge 3$, defines $a_{n}$ as the smallest positive integer that is not of the form
$$\frac 1k\big (a_{i_1}+\cdots +a_{i_h}-(a_{i_1'}\cdots+a_{i_{h-k}'})\big )$$ for any $1\le i_1,\dots,i_h,i_1',\dots,i_{h-k}'$ $\le n-1$ and $1\le k\le h-1$. Since there are at most $(n-1)^{2h-1}+\cdots +(n-1)^{h+1}\le (n-1)^{2h}/(n-2)$ forbidden elements for $a_{n}$, then $a_{n}\le 1+(n-1)^{2h}/(n-2)\le 2n^{2h-1}.$

It is possible that the classic greedy algorithm  may provide a denser sequence when $g>1$, but it is not clear how to prove it. For this reason other methods have been used  to obtain dense infinite $B_h[g]$ sequences:

\noindent {\bf Theorem A.} {\it
	Given $h\ge 2$ and $g\ge 1$, there exists an infinite $B_h[g]$ sequence with $a_n\ll n^{h+\delta}$ with $\delta=\delta_h(g)\to 0$ when $g\to \infty.$}

Erd\H os and Renyi \cite{ER} proved Theorem A for $h=2$ using the probabilistic method. Ruzsa gave the first proof for any $h\ge 3$ (a sketch of that proof, which consists in an explicit construction, appeared in \cite{EF} and a detailed proof in \cite{CKRV}).

The aim of this paper is to describe a distinct greedy algorithm that provides a $B_h[g]$ sequence that grows slower than all previous known constructions for $g>1$. More specifically, Theorem \ref{main} gives an easy proof of Theorem A with $\delta_h(g)=(h-1)/g.$ 

In the table below we resume all  previous results on this problem for $g>1$ expressed in form  $a_n\ll n^{h+\delta_h(g)}$ and the method used in each case.  The probabilistic method, which we denote by PM, has been used in  most of the constructions.
\smallskip
$$\begin{array}{|cc|c|}\hline \delta_2(g)&\le 2/g+o_n(1) \qquad\quad & \text{PM \cite{ER}}  \\ \hline \delta_2(g)&\le 1/g+o_n(1)\qquad \quad& \text{PM + alteration method \cite{Ci3}}\\ \hline \delta_3(g)&\le 2/g+\epsilon,\ \ \ \epsilon>0 \quad  & \text{PM+ combinatorial ingredients \cite{CKRV}} \\ \hline\delta_h(g)&\ll_h 1/(\log g \log \log g)&  \text{Explicit construction, Ruzsa \cite{EF},\cite{CKRV}} \\ \hline\delta_h(g)&\ll_h 1/g^{1/(h-1)}\qquad \quad &\text{ PM+ Kim-Vu method \cite{KV}} \\ \hline \delta_h(g)& \ll 2^hh (h!)^2/g \qquad \quad & \text{PM + Sunflower Lemma \cite{CKRV}}\\ \hline \delta_h(g)&\le (h-1)/g \qquad \qquad &\text{New greedy algorithm, Theorem \ref{main}} \\ \hline\end{array}$$

For $g=1$ there are special constructions of $B_h$ sequences with slower growth.
\smallskip
$$\begin{array}{|cc|c|}\hline \delta_h(1)&\le h-1 \qquad\qquad\qquad\qquad\qquad\qquad\quad& \text{Classic greedy algorithm} \\ \hline \delta_2(1)&\le 1-\epsilon_n,\ \ \epsilon_n=\log \log n/ \log n \qquad\quad & \quad \text{PM + graph tools \cite{AKS} }\\ \hline \delta_2(1)&\le \sqrt 2-1+o_n(1) \qquad \qquad\qquad\qquad\quad& \text{Real log method + PM \cite{Ru2}} \\ \hline \delta_2(1)&\le \sqrt 2-1+o_n(1) \qquad \qquad\qquad\qquad\quad& \text{Explicit construction \cite{Ci4}}\\  \hline \delta_h(1)&\le \sqrt{(h-1)^2+1}-1+o_n(1),\ h=3,4& \text{Gaussian arg method  + PM \cite{Ci5}}  \\ \hline \delta_h(1)&\le \sqrt{(h-1)^2+1
}-1+o_n(1),\ h\ge 3\quad  & \text{Discrete log method + PM \cite{Ci4}}\\ \hline\end{array}$$
\section{A new  greedy algorithm}
We need to introduce the notion of {\it strong} $B_h[g]$ set.

\begin{defn}
	We say that $A_n=\{a_1,\dots,a_n\}$ is a strong $B_h[g]$ set if the following conditions are satisfied:
	\begin{itemize}
		\item[i)] $A_n$ is a $B_h[g]$ set.
		\item[ii)] $|\{x: r_{A_n}(x)\ge s\}|\le n^{h+(1-s)(h-1)/g},$ for $s=1,\dots,g$, where
	\end{itemize}
 $$r_{A_n}(x)=|\{(a_{i_1},\dots a_{i_h}):\quad 1\le i_1\le \cdots \le i_h\le n,\quad x=a_{i_1}+\cdots +a_{i_h}\}|.$$
\end{defn}

\begin{thm}\label{main}Let $a_1=1$ and for $n\ge 1$
	define $a_{n+1}$ as the smallest positive integer, distinct to $a_1,\dots,a_n$, such that
	$a_1,\dots,a_{n+1}$ is a strong $B_h[g]$ set. 	The infinite sequence $A=\{a_n\}$ given by this greedy algorithm  is a $B_h[g]$ sequence with $a_n\le 2gn^{h+(h-1)/g}.$
\end{thm}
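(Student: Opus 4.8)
The plan is to run the greedy algorithm and maintain the invariant that at every step $A_n = \{a_1,\dots,a_n\}$ is a strong $B_h[g]$ set, and simultaneously to bound the size of $a_{n+1}$ by counting how many positive integers are \emph{forbidden} at step $n+1$. The key observation is that adjoining a new element $a_{n+1}$ to $A_n$ cannot destroy property i) (being a $B_h[g]$ set) unless it creates a sum of $h$ elements, at least one of them equal to $a_{n+1}$, that coincides with an already heavily-represented value; and it cannot destroy property ii) unless it pushes one of the counting functions $|\{x : r_{A_{n+1}}(x)\ge s\}|$ above the allowed threshold $(n+1)^{h+(1-s)(h-1)/g}$. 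So I would first argue that there always exists a legal choice of $a_{n+1}$, and give a quantitative bound on the smallest one.

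First I would count the ``arithmetic'' obstructions coming from property i). If $a_1,\dots,a_{n+1}$ fails to be $B_h[g]$ but $a_1,\dots,a_n$ is, then there is an integer $x$ with $g+1$ representations as an ordered $h$-sum, and by minimality at least one representation must use $a_{n+1}$; subtracting two such representations and isolating the unknown $a_{n+1}$ (as in the classical analysis reproduced in the introduction) shows $a_{n+1}$ is determined, up to division by an integer $k$ with $1\le k\le h$, by a choice of at most $2h-1$ indices among $1,\dots,n$ together with the ``multiplicity level'' of $x$. The sharpened point, which uses the \emph{strong} hypothesis, is that the number of $x$ with $r_{A_n}(x)\ge g$ is at most $n^{h+(1-g)(h-1)/g} = n^{h - (h-1)}= n$, so the count of forbidden $a_{n+1}$ arising this way is far smaller than the trivial $n^{2h-1}$ — this is precisely where property ii) buys us the improvement from exponent $2h-1$ down to roughly $h + (h-1)/g$.

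Next I would count the obstructions coming from property ii). For each $s\in\{1,\dots,g\}$, adding $a_{n+1}$ increases $r(x)$ only for those $x$ of the form $a_{n+1}+a_{i_1}+\dots+a_{i_{h-1}}$; such an $x$ gets promoted to level $s$ only if $x - a_{n+1}$ was already represented at level $s-1$ as a sum of $h-1$ elements of $A_n$. The number of such $x$, for a fixed $a_{n+1}$, is controlled by the level-$(s-1)$ counting function for $(h-1)$-sums, which the invariant (applied at the previous stages, or an analogous auxiliary bound derived from it) keeps of size about $n^{h-1+(2-s)(h-1)/g}$. One then checks that the allowed increment in passing from $n$ to $n+1$, namely $(n+1)^{h+(1-s)(h-1)/g} - n^{h+(1-s)(h-1)/g}$, which is of order $n^{h-1+(1-s)(h-1)/g}$, can accommodate all but at most $O(g\, n^{\,h-1+(h-1)/g})$ choices of $a_{n+1}$. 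Summing the two families of forbidden values and comparing with the search range, one finds a legal $a_{n+1}$ below $2g(n+1)^{h+(h-1)/g}$, which closes the induction; and since each $A_n$ is in particular a $B_h[g]$ set, the limit sequence $A$ is $B_h[g]$ with the claimed growth.

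The main obstacle I anticipate is the bookkeeping in the second part: making the ``promotion'' count for property ii) genuinely match the telescoping gap $(n+1)^{\beta_s}-n^{\beta_s}$ for the right exponents $\beta_s = h+(1-s)(h-1)/g$, and in particular verifying that the auxiliary $(h-1)$-sum level counts are actually implied by the stated invariant (one may need to record an extra clause in the induction hypothesis about $(h-1)$-fold sums, or derive it from property ii) by a pigeonhole/convexity argument). The arithmetic obstruction count is essentially the classical argument and should be routine; the delicate step is choosing the exponents $(1-s)(h-1)/g$ so that \emph{every} threshold survives simultaneously, and checking the base case $s=1$ (where the bound $n^{h}$ on the number of realized sums must be consistent with $B_h[g]$ and with the size of $a_n$).
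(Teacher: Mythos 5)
Your overall framework---induct on $n$, classify the integers forbidden for $a_{n+1}$ into those that would break condition i) and those that would break condition ii) at each level $s$, then count each class---is the paper's. Your treatment of the condition-i) obstructions is essentially the paper's argument, modulo an exponent slip: $n^{h+(1-g)(h-1)/g}=n^{1+(h-1)/g}$, not $n^{h-(h-1)}=n$. (The error is in the favorable direction, and the resulting count of forbidden values is still $O(n^{h-1})\cdot n^{1+(h-1)/g}=O(n^{h+(h-1)/g})$, which suffices.)

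The genuine gap is precisely the step you flag as ``the main obstacle,'' and the route you sketch for it is not the right one. You try to bound, \emph{for each fixed candidate} $m$, the number of $x$ promoted to level $s$, and for that you need control of level-$(s-1)$ counts of $(h-1)$-fold sums---an auxiliary invariant that is not part of the definition of a strong $B_h[g]$ set and does not obviously follow from it: condition ii) says nothing about how many heavily represented $x$ can concentrate in a single translate $m+A_n+\cdots+A_n$. The paper needs no per-$m$ bound; it averages over $m$. Let $T_{s,n}(m)$ be the number of $x$ with $r_{A_n}(x)\ge s-1$ lying in $km+A_n+\cdots+A_n$ ($h-k$ copies) for some $1\le k\le h$ (note you must allow $k\ge 2$, i.e.\ representations using $m$ more than once). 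If $T_{s,n}(m)\le n^{h-1+(1-s)(h-1)/g}$ then $R_s(A_n\cup m)\le n^{h+(1-s)(h-1)/g}+n^{h-1+(1-s)(h-1)/g}\le (n+1)^{h+(1-s)(h-1)/g}$ and $m$ is not forbidden at level $s$; so every forbidden $m$ has $T_{s,n}(m)$ exceeding that threshold. On the other hand, $\sum_m T_{s,n}(m)\le (1+n+\cdots+n^{h-1})R_{s-1}(A_n)$, since for each fixed $x$ and each $k$ there are at most $n^{h-k}$ values of $m$ with $x\in km+A_n+\cdots+A_n$. Markov's inequality then yields $|F_{s,n}|\le 2n^{h+(h-1)/g}$ using only the $h$-sum invariant $R_{s-1}(A_n)\le n^{h+(2-s)(h-1)/g}$. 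Without this double count (or an actual proof of your auxiliary $(h-1)$-sum invariant, which you do not supply), your argument does not close. Note also that your claimed per-level count $O\bigl(g\,n^{h-1+(h-1)/g}\bigr)$ is an exponent too optimistic: the correct totals are $2n^{h+(h-1)/g}$ for condition i), $2n^{h+(h-1)/g}$ for each level $s=2,\dots,g$ (level $s=1$ contributes nothing since $R_1(A_n\cup m)\le (n+1)^h$ always), plus $n$ for the elements of $A_n$ itself, giving $2gn^{h+(h-1)/g}+n\le 2g(n+1)^{h+(h-1)/g}-1$, which is exactly what the bound $a_{n+1}\le 2g(n+1)^{h+(h-1)/g}$ requires.
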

\begin{proof} Let $a_1=1,\ a_2=2$ and suppose that $A_n=\{a_1,\dots,a_n\}$  is the strong $B_h[g]$ set given by this algorithm for some $n\ge 2$. 	We will find an upper bound for the number of forbiden positive integers for $a_{n+1}$. We  use the notation $R_s(A_n)=|\{x: r_{A_n}(x)\ge s\}|$
to classify the forbidden elements $m$ in the following sets:
	\begin{itemize}\item[i)]	$F_n=\{m:\ m\in A_n \}$.
		\item[ii)]$F_{0,n}=\{m:\ A_n\cup m \text{ is not a } B_h[g] \text{ set}\}$
		\item[iii)]$F_{s,n}=\{m:\ R_{s}(A_n\cup m)>(n+1)^{h+(1-s)(h-1)/g}\},\quad s=1,\dots,g$.
			\end{itemize}

Hence $a_{n+1}$ is the smallest positive integer not belonging to  $\left (\bigcup_{s=0}^gF_{s,n}\right )\cup F_n$ and then the proof of Theorem \ref{main} will be completed if we prove that  \begin{equation}\label{Upper}\left |\left (\bigcup_{s=0}^gF_{s,n}\right )\cup F_n\right |\le 2g(n+1)^{h+(h-1)/g}-1.\end{equation}
	
	It is clear that $|F_n|=n$. Next, we find an upper bound for the cardinality of $F_{s,n},\ s=0,\dots,g$.
	
	 The elements of $F_{0,n}$ are the positive integers of the form $\frac 1k\left (x-(a_{i_1}+\cdots +a_{i_{h-k}})\right )$ for some $1\le i_1,\dots,i_{h-k}\le n,\ 1\le k\le h-1$ and for some $x$ with $r_{A_n}(x)= g$. Thus,
	\begin{eqnarray*}\label{F0}|F_{0,n}|&\le &(n^{h-1}+\cdots +n+1)|\{x:\ r_{A_n}(x)= g\}|\\ &\le & n^{h}/(n-1)\ R_{g}(A_n)\\ &\le & 2n^{h-1}n^{1+(h-1)/g}=2n^{h+(h-1)/g}.\end{eqnarray*}
	
	For $s=1$, note that $R_1(A_n\cup m)\le (n+1)^h$ for any $m$, so $|F_{1,n}|=0$. 
	
	For $s=2,\dots,g$, and for any $m$ we have
	\begin{equation}\label{R}R_{s}(A_n\cup m)\le R_{s}(A_n)+T_{s,n}(m),\end{equation}
	where
	$$T_{s,n}(m)=|\big \{x:\ r_{A_n}(x)\ge s-1,\ x\in km+ A_n+\stackrel{h-k}\cdots+ A_n \text{ for some } 1\le k\le h\big \}|.$$ In the  case $k=h$, the expression $x\in km+A_n+\stackrel{h-k}\cdots+ A_n$  means $x=hm$.
	
	We observe that if $T_{s,n}(m)\le n^{h-1+(1-s)(h-1)/g}$, using \eqref{R} and that $A_n$ is a strong $B_h[g]$ set, we have
	\begin{eqnarray*}R_{s}(A_n\cup m)&\le & n^{h+(1-s)(h-1)/g}+n^{h-1+(1-s)(h-1)/g}\\ &\le& (n+1)^{h+(1-s)(h-1)/g}\end{eqnarray*} and then $m\not \in F_{s,n}$. 	Thus,
	\begin{eqnarray}\label{TT}\sum_mT_{s,n}(m)&\ge & \sum_{m\in F_{s,n}}T_{s,n}(m)  > n^{h-1+(1-s)(h-1)/g}|F_{s,n}| .\end{eqnarray}	
	
On the other hand, 	when we sum $T_{s,n}(m)$ over all $m$, each $x$ with $r_{A_n}(x)\ge s-1$ is counted no more than $ |A_n+\stackrel{h-1}\cdots+ A_n|+\cdots +|A_n|+1\le n^{h-1}+\cdots +n+1$ times. Then
	\begin{eqnarray}\label{TTT}\sum_mT_{s,n}(m)&\le & (1+n+\cdots +n^{h-1})R_{s-1}(A_n)\\ &\le & \frac{n^h-1}{n-1}n^{h+(2-s)(h-1)/g}.\nonumber\end{eqnarray}
	
Inequalities \eqref{TT} and \eqref{TTT} imply
	\begin{equation}\label{Fs}|F_{s,n}|\le \frac{n^h-1}{n-1}n^{1+(h-1)/g}\le 2n^{h+(h-1)/g}.\end{equation}
	
	Taking into account \eqref{F0}, the inequalities \eqref{Fs} for $s=2,\dots,g$ and the estimate $|F_n|=n$, we get \begin{eqnarray*}\left |\left (\bigcup_{s=0}^gF_{s,n}\right )\cup F_n \right |&\le &2n^{h+(h-1)/g}+ 2(g-1)n^{h+(h-1)/g}+n\\ &= & 2gn^{h+(h-1)/g}+n\le  2g(n+1)^{h+(h-1)/g}-1, \end{eqnarray*} which,  according to \eqref{Upper}, finishes the proof.
\end{proof}

\end{document}